
\documentclass[]{interact}

\usepackage{epstopdf}
\usepackage[caption=false]{subfig}

\usepackage[longnamesfirst,sort]{natbib}
\bibpunct[, ]{(}{)}{;}{a}{,}{,}


\theoremstyle{plain}
\newtheorem{theorem}{Theorem}[section]
\newtheorem{lemma}[theorem]{Lemma}

\theoremstyle{definition}

\theoremstyle{remark}

\begin{document}

\title{Sum of Independent XGamma Distributions}

\author{
\name{Therrar Kadri\textsuperscript{a,b}\thanks{CONTACT T. Kadri. Email: therrar@hotmail.com}, Rahil Omairi\textsuperscript{b}, Khaled Smaili\textsuperscript{c} and Seifedine Kadry \textsuperscript{d}}
\affil{\textsuperscript{a}Department of Science, Northwestern Polytechnic, Grande Praire, Canada;\\ \textsuperscript{b}Department of Art and Science, Lebanese International University, Lebanon\\
\textsuperscript{c}Department of Applied Mathematics, Faculty of Sciences, Lebanese University, Zahle, Lebanon \\
\textsuperscript{d} Department of Applied Data Science, Noroff University College, Norway
}
}

\maketitle

\begin{abstract}
The XGamma distribution is a generated distribution from a mixture of
Exponential and Gamma distributions. It is found that in many cases the XGamma
has more flexibility than the Exponential distribution. In this paper we
consider the sum of independent XGamma distributions with different
parameters. We showed that the probability density function of this
distribution is a sum of the probability density function of the Erlang
distributions. As a consequence, we find exact closed expressions of the other
related statistical functions. Next, we examine the estimation of the
parameters by maximum likelihood estimators. We observe in an applications a
real data set which shows that this model provides better fit to the data as
compared to the sum of the Exponential distributions, the Hypoexponential models.
\end{abstract}

\begin{keywords}
Probability Density Function; Convolution of Random Variables; XGamma Distribution; Hypoexponential Distribution; Maximum Likelihood Estimation
\end{keywords}

\section{Introduction}

Some random variables play an important role in queuing and survival problems
\citep{1,3}. However, the sum of independent random variables is one
of the most important and used function of random variables that takes a
worthy part in modeling many events in various fields of science
\citep{17}, Markov process, \citep{7} reliability and performance
evaluation \citep{4}, and spatial diversity \citep{9}. The sum of independent
random variables has been studied through many authors over time, as well as
the case of finding their exact probability density function (PDF), cumulative
distribution function (CDF), moment generating function (MGF) and other
statistical parameters. Moreover, we may present some: \citep{2} on the sum of Exponential random variables, \citep{11} on
the sum of Gamma random variables.

The introduction of new lifetime distributions or modified lifetime
distributions has become a time-valuable methods in statistical and biomedical
research. In addition, finite mixture distributions that emerge from standard
distributions plays a better role in modeling real-life phenomena as compared
to the standard ones see \citep{10}. In recent years, it turns out that there
are many well-known distributions used to model data sets do not offer enough
flexibility to provide proper fit. For this reason, new distributions are
introduced that are more flexible and fit the data properly. By maintaining
the role of finite mixture distributions in modeling time-to-event data see
\citep{5}. \citet{12} introduced a new finite mixture of Exponential
and Gamma distributions called the XGamma distribution. They list some of its
statistical parameters and clarify an application that this model is provides
an adequate fit for the data set more than the Exponential distribution. For
this reason, they consider this new finite mixture and found that the XGamma
model provides better fit to the data as compared to the Exponential model.

In this paper, we consider the sum of $n$ independent XGamma random variables
with different scale parameters. We give an exact closed expressions of the
PDF of this distribution as a linear combination of Erlang random variables.
As a consequence, we determine closed expressions of the CDF, MGF and moment
of order k of the convolution of the XGamma distribution and also as a linear
combination of the Erlang random variables. We discuss estimation of the
parameters by maximum likelihood estimators. We observe in an application to
real data set that this model is quite flexible and can be used quite
effectively in analyzing reliability models and indicated that the Mixed
Erlang distribution is a serious competitor to the others.

\subsection{Some Preliminaries}\label{class}

In this section, we state some continuous distributions. Moreover, we give
some of its statistical parameter.

The Erlang distribution is the sum of $n$ independent identical Exponential
random variables with parameter $\theta\in%
\mathbb{R}
_{+}.$ We write it as $E_{\theta}^{n}\thicksim$Erl($n,\theta$). The
probability density function of $E_{\theta}^{n}$ is given as%
\begin{equation}
f_{E_{\theta}^{n}}\left(  t\right)  =\left\{
\begin{array}
[c]{cc}%
\frac{(\theta t)^{n-1}\theta e^{-\theta t}}{(n-1)!} & \text{if }t\geq0\\
0 & \text{if }t<0
\end{array}
\right. \label{PDFErla}%
\end{equation}
and with MGF of $E_{\theta}^{n}$ is%
\begin{equation}
\Phi_{E_{\theta}^{n}}\left(  t\right)  =\frac{\theta^{n}}{(\theta-t)^{n}%
},\text{ for }t>\theta.\label{MGFErlangg}%
\end{equation}
Moreover, the Laplace transform of $\ $\ PDF$\ $of $E_{\theta}^{n}$ is%
\begin{equation}%
\mathcal{L}%
\left\{  f_{E_{\theta}^{n}}(t)\right\}  =\frac{\theta^{n}}{(\theta+t)^{n}%
},\text{ for }t>\theta.\label{LaplacErlang}%
\end{equation}

The Hypoexponential distribution is the distribution of the sum of $m\geq
2$\ independent Exponential random variables with different parameters
presented by \citet{21}. This distribution is used in modeling
multiple exponential stages in series, when those stages are not distinct then
it is considered as a general case of the Hypoexponential distribution. This
general case can be expressed as $H_{\overrightarrow{\theta}}^{\overrightarrow
{k}}=\sum\limits_{i=1}^{n}\sum\limits_{j=1}^{k_{i}}X_{ij}$, where $X_{ij}$ is
an Exponential random variables with parameter $\theta_{i},$ $i=1,2,...,n,$
and written as $H_{\overrightarrow{\theta}}^{\overrightarrow{k}}\thicksim
$Hypoexp($\overrightarrow{\theta},\overrightarrow{k}$), where $\overrightarrow
{\theta}=(\theta_{1},\theta_{2},...,\theta_{n})\in%
\mathbb{R}
_{+}^{n},\ \overrightarrow{k}=(k_{1},k_{2},...,k_{n})\in%
\mathbb{N}
^{n}\ $and $m=\sum\limits_{i=1}^{n}k_{i}.$

The XGamma distribution is a finite mixture of Exponential and Gamma
distributions presented by  \citet{12}. We write it as XG$_{\theta
}\thicksim$XG($\theta$). The PDF of this distribution expressed as
$f_{XG_{\theta}}\left(  t\right)  =\sum\limits_{i=1}^{2}\pi_{i}f_{i}\left(
t\right)  ,$ where $f_{1}\left(  t\right)  $ is the PDF of Exponential
distribution with parameter $\theta$ and $f_{2}(t)$ is the PDF of Gamma
distribution with scale parameter $\theta\ $and shape parameter $3$, and with
mixing proportions $\pi_{1}=\theta/(1+\theta)\ $and $\pi_{2}=1-\pi_{1}.$
Moreover, they expressed the PDF of the XGamma distribution as%
\begin{equation}
f_{XG_{\theta}}(t)=\frac{\theta^{2}}{(1+\theta)}(1+\frac{\theta}{2}%
t^{2})e^{-\theta t},\label{PDFXGamma}%
\end{equation}
with$\ t,\ \theta>0,$ and $\theta\ $is the scale parameter.

Moreover, the MGF\ of XGamma distribution is given as%
\begin{equation}
\Phi_{XG_{\theta}}(t)=\frac{\theta^{2}((\theta-t)^{2}+\theta)}{(1+\theta
)(\theta-t)^{3}}\label{MGFXGamma}%
\end{equation}

\section{The Exact Expression of Sum of Independent XGamma Distributions}

In this section, we find an exact close expressions of the PDF for the sum of
$n$ independent XGamma distribution with different scale parameter $\theta$.
This closed expression is formulated as a linear combination of Erlang distribution.

We suppose that $X_{j}$ are $n$ independent XGamma distributions.

Let $X_{j}\thicksim$XG$(\theta_{j}),$ for $1\leq j\leq n.$ From
Eq(\ref{PDFXGamma}) the PDFs of $X_{j}$ are $f_{X_{j}}(t)=\frac{\theta_{j}%
^{2}}{(1+\theta_{j})}(1+\frac{\theta_{j}}{2}t^{2})e^{-\theta_{j}t}.$

\begin{theorem}
\label{PDF1DiffCase}Let $S_{n}$ be the sum of $n$ independent XGamma
distributions denoted as $S_{n}\thicksim$HypoXG($\overrightarrow{\theta}$). Then the PDF of $S_{n}$ is given as%
\begin{equation}
f_{S_{n}}(t)=%
{\textstyle\sum\limits_{i=1}^{n}}
{\textstyle\sum\limits_{k=1}^{3}}
R_{ik}f_{Y_{ik}}(t)\label{HypoXGPDF2}%
\end{equation}
with%
\begin{equation}
R_{ik}=\frac{A_{ik}}{\theta_{i}^{4-k}}%
{\textstyle\prod\limits_{l=1}^{n}}
\frac{\theta_{l}^{2}}{(1+\theta_{l})},\label{CoeffHypoXG2}%
\end{equation}
and$\ Y_{ik}\thicksim$Erl$\left(  4-k,\theta_{i}\right)  $

Furthermore,%
\begin{align}
A_{i1}  & =\theta_{i}%
{\textstyle\prod\limits_{j=1,j\neq i}^{n}}
\left(  \frac{(\theta_{j}-\theta_{i})^{2}+\theta_{j}}{(\theta_{j}-\theta
_{i})^{3}}\right) \nonumber\\
A_{i2}  & =A_{i1}\left(
{\textstyle\sum\limits_{j=1,j\neq i}^{n}}
\left(  \frac{2(\theta_{j}-\theta_{i})}{(\theta_{j}-\theta_{i})^{2}+\theta
_{j}}-\frac{3}{\theta_{j}-\theta_{i}}\right)  \right) \nonumber\\
A_{i3}  & =\frac{A_{i1}}{2}\left(
{\textstyle\sum\limits_{j=1,j\neq i}^{n}}
\left(  \frac{2(\theta_{j}-\theta_{i})}{(\theta_{j}-\theta_{i})^{2}+\theta
_{j}}-\frac{3}{\theta_{j}-\theta_{i}}\right)  \right)  ^{2}+\nonumber\\
& \frac{A_{i1}}{2}\left(  \frac{2}{\theta_{i}}+%
{\textstyle\sum\limits_{j=1,j\neq i}^{n}}
\left(  -2\frac{\left(  \theta_{j}-\theta_{i}\right)  ^{2}-\theta_{j}}{\left(
(\theta_{j}-\theta_{i})^{2}+\theta_{j}\right)  ^{2}}+\frac{3}{\left(
\theta_{j}-\theta_{i}\right)  ^{2}}\right)  \right)  .\label{eqsAip}%
\end{align}

\end{theorem}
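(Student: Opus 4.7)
My plan is to compute the MGF of $S_n$ as the product of the individual XGamma MGFs, apply a partial fraction decomposition to the rational function that appears, and recognize each resulting term as a constant multiple of an Erlang MGF; uniqueness of the MGF then yields the claimed PDF.

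By independence and \eqref{MGFXGamma},
\begin{equation*}
\Phi_{S_n}(t)=\prod_{j=1}^{n}\Phi_{XG_{\theta_j}}(t)=\left(\prod_{l=1}^{n}\frac{\theta_l^{2}}{1+\theta_l}\right)g(t),\qquad g(t):=\prod_{j=1}^{n}\frac{(\theta_j-t)^{2}+\theta_j}{(\theta_j-t)^{3}}.
\end{equation*}
The prefactor already matches the common product in \eqref{CoeffHypoXG2}, so the theorem reduces to decomposing $g$. Since the $\theta_i$ are distinct, $g$ has a pole of order exactly three at each $\theta_i$ and admits an expansion $g(t)=\sum_{i=1}^{n}\sum_{k=1}^{3}A_{ik}(\theta_i-t)^{k-4}$; the coefficients are then read off by Taylor-expanding $G_i(t):=(\theta_i-t)^{3}g(t)$ around $t=\theta_i$, which gives $A_{i1}=G_i(\theta_i)$, $A_{i2}=-G_i'(\theta_i)$, and $A_{i3}=\tfrac{1}{2}G_i''(\theta_i)$.

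I would then factor $G_i(t)=N_i(t)h_i(t)$ with $N_i(t)=(\theta_i-t)^{2}+\theta_i$ and $h_i(t)=\prod_{j\neq i}\frac{(\theta_j-t)^{2}+\theta_j}{(\theta_j-t)^{3}}$. Because $N_i(\theta_i)=\theta_i$, $N_i'(\theta_i)=0$ and $N_i''(\theta_i)=2$, Leibniz's rule collapses the evaluations to $A_{i1}=\theta_i h_i(\theta_i)$, $A_{i2}=-\theta_i h_i'(\theta_i)$, and $A_{i3}=h_i(\theta_i)+\tfrac{\theta_i}{2}h_i''(\theta_i)$. Differentiating $\log h_i$ termwise gives
\begin{equation*}
(\log h_i)'(t)=\sum_{j\neq i}\left[\frac{-2(\theta_j-t)}{(\theta_j-t)^{2}+\theta_j}+\frac{3}{\theta_j-t}\right],
\end{equation*}
and combining this with the identity $h_i''/h_i=(\log h_i)''+((\log h_i)')^{2}$ evaluated at $t=\theta_i$ reproduces the bracketed sums in \eqref{eqsAip} exactly. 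Finally, each summand $A_{ik}(\theta_i-t)^{k-4}$ equals $R_{ik}\cdot\theta_i^{4-k}/(\theta_i-t)^{4-k}$, which is $R_{ik}\Phi_{E_{\theta_i}^{4-k}}(t)$ by \eqref{MGFErlangg}, so $\Phi_{S_n}=\sum_{i,k}R_{ik}\Phi_{Y_{ik}}$ and inverting term by term yields \eqref{HypoXGPDF2}.

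The main obstacle is the computation of $A_{i3}$: it combines $(\log h_i)''(\theta_i)$, the square of $(\log h_i)'(\theta_i)$, and a residual $2h_i(\theta_i)/\theta_i$ coming from $N_i''(\theta_i)$, and these three pieces must be assembled without sign errors to match \eqref{eqsAip}. The residues $A_{i1}$ and $A_{i2}$ require only a direct evaluation and a single logarithmic derivative respectively, but the second-order bookkeeping for $A_{i3}$ is where the calculation is most delicate.
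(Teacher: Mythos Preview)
Your proposal is correct and follows essentially the same route as the paper: compute the transform of $S_n$ as a product, perform a partial-fraction expansion of the resulting rational function at the repeated poles $\theta_i$, extract the coefficients $A_{ik}$ via logarithmic differentiation, and identify each term with an Erlang transform. The only cosmetic differences are that the paper works with the Laplace transform (poles at $-\theta_i$) rather than the MGF, and that it log-differentiates the full expression $(s+\theta_i)^3\prod_j(\cdot)$ directly rather than first splitting off the factor $N_i(t)=(\theta_i-t)^2+\theta_i$ and applying Leibniz as you do; your organization is arguably cleaner for the $A_{i3}$ bookkeeping, but the substance is identical.
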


\begin{proof}
Since $X_{i}$ are independent, then $%
\mathcal{L}%
\{f_{S_{n}}(t)\}=\prod\limits_{i=1}^{n}%
\mathcal{L}%
\{f_{X_{i}}(t)\}.$ But using the fact that $%
\mathcal{L}%
\{f_{X_{i}}(t)\}=\Phi_{X_{i}}(-t),$ the MGF of the XGamma is given in
Eq(\ref{MGFXGamma}), then $%
\mathcal{L}%
\{f_{S_{n}}(t)\}=\prod\limits_{i=1}^{n}\frac{\theta_{i}^{2}((\theta_{i}%
+t)^{2}+\theta_{i})}{(1+\theta_{i})(\theta_{i}+t)^{3}}=\left(  \prod
\limits_{i=1}^{n}\frac{\theta_{i}^{2}}{(1+\theta_{i})}\right)
{\textstyle\prod\limits_{i=1}^{n}}
\left(  \frac{(\theta_{i}+t)^{2}+\theta_{i}}{(\theta_{i}+t)^{3}}\right)  .$

Now by using the Heaviside Theorem for Repeated Roots, we have%
\begin{align}%
{\textstyle\prod\limits_{i=1}^{n}}
\left(  \frac{(\theta_{i}+t)^{2}+\theta_{i}}{(\theta_{i}+t)^{3}}\right)   & =%
{\textstyle\sum\limits_{k=1}^{3}}
\frac{A_{1k}}{(\theta_{1}+t)^{4-k}}+%
{\textstyle\sum\limits_{j=1}^{3}}
\frac{A_{2k}}{(\theta_{2}+t)^{4-k}}+...+%
{\textstyle\sum\limits_{p=1}^{3}}
\frac{A_{nk}}{(\theta_{n}+t)^{4-k}}\nonumber\\
& =%
{\textstyle\sum\limits_{i=1}^{n}}
{\textstyle\sum\limits_{k=1}^{3}}
\frac{A_{ik}}{(\theta_{i}+t)^{4-k}}\label{HVSeqs}%
\end{align}
where%
\begin{equation}
A_{ik}=\underset{t\rightarrow-\theta_{i}}{\lim}\frac{1}{(k-1)!}\frac{d^{k-1}%
}{dt^{k-1}}\left(  \left(  t+\theta_{i}\right)  ^{3}%
{\textstyle\prod\limits_{j=1}^{n}}
\left(  \frac{(\theta_{j}+t)^{2}+\theta_{j}}{(\theta_{j}+t)^{3}}\right)
\right) \label{EqAip}%
\end{equation}
for $k=1,2,3$ and $i=1,2,...,n$. Next, we can write $\left(  t+\theta
_{i}\right)  ^{3}%
{\textstyle\prod\limits_{j=1}^{n}}
\left(  \frac{(\theta_{j}+t)^{2}+\theta_{j}}{(\theta_{j}+t)^{3}}\right)
=\left(  (\theta_{i}+t)^{2}+\theta_{i}\right)
{\textstyle\prod\limits_{j=1,j\neq i}^{n}}
\left(  \frac{(\theta_{j}+t)^{2}+\theta_{j}}{(\theta_{j}+t)^{3}}\right)  .$ By
substituting in Eq (\ref{EqAip}), we obtain that%
\[
A_{ik}=\underset{t\rightarrow-\theta_{i}}{\lim}\frac{1}{(k-1)!}\frac{d^{k-1}%
}{dt^{k-1}}\left(  \left(  (\theta_{i}+t)^{2}+\theta_{i}\right)
{\textstyle\prod\limits_{j=1,j\neq i}^{n}}
\left(  \frac{(\theta_{j}+t)^{2}+\theta_{j}}{(\theta_{j}+t)^{3}}\right)
\right)
\]
Next, we derive $A_{ik},$ for $k=1,2,3$ and $i=1,2,...,n$, then we have%
\begin{align}
A_{i1}  & =\underset{s\rightarrow-\theta_{i}}{\lim}\left[  \left(  \left(
s+\theta_{i}\right)  ^{3}%
{\textstyle\prod\limits_{j=1}^{n}}
\left(  \frac{(\theta_{j}+s)^{2}+\theta_{j}}{(\theta_{j}+s)^{3}}\right)
\right)  \right] \nonumber\\
& =\underset{s\rightarrow-\theta_{i}}{\lim}\left[  \left(  (\theta_{i}%
+s)^{2}+\theta_{i}\right)
{\textstyle\prod\limits_{j=1,j\neq i}^{n}}
\left(  \frac{(\theta_{j}+s)^{2}+\theta_{j}}{(\theta_{j}+s)^{3}}\right)
\right] \nonumber\\
& =\theta_{i}%
{\textstyle\prod\limits_{j=1,j\neq i}^{n}}
\left(  \frac{(\theta_{j}-\theta_{i})^{2}+\theta_{j}}{(\theta_{j}-\theta
_{i})^{3}}\right)  .\label{EqAi11}%
\end{align}
On the other hand,%
\begin{equation}
A_{i2}=\underset{s\rightarrow-\theta_{i}}{\lim}\left[  \left(  \left(
s+\theta_{i}\right)  ^{3}%
{\textstyle\prod\limits_{j=1}^{n}}
\left(  \frac{(\theta_{j}+s)^{2}+\theta_{j}}{(\theta_{j}+s)^{3}}\right)
\right)  ^{\prime}\right] \label{EqAi2}%
\end{equation}
Let $Y=\left(  \left(  s+\theta_{i}\right)  ^{3}%
{\textstyle\prod\limits_{j=1}^{n}}
\left(  \frac{(\theta_{j}+s)^{2}+\theta_{j}}{(\theta_{j}+s)^{3}}\right)
\right)  ,$

then $\ln Y=3\ln\left(  s+\theta_{i}\right)  +%
{\textstyle\sum\limits_{j=1}^{n}}
\ln\left(  (\theta_{j}+s)^{2}+\theta_{j}\right)  -3\ln(\theta_{j}+s)),$

which implies $\frac{Y^{\prime}}{Y}=\frac{3}{s+\theta_{i}}+%
{\textstyle\sum\limits_{j=1}^{n}}
\left(  \frac{2(\theta_{j}+s)}{(\theta_{j}+s)^{2}+\theta_{j}}-\frac
{3}{s+\theta_{j}}\right)  ,$

thus$\ Y^{\prime}=Y\left(  \frac{3}{s+\theta_{i}}+%
{\textstyle\sum\limits_{j=1}^{n}}
\left(  \frac{2(\theta_{j}+s)}{(\theta_{j}+s)^{2}+\theta_{j}}-\frac
{3}{s+\theta_{j}}\right)  \right)  .$ By substituting in Eq (\ref{EqAi2}), we
obtain%
\begin{align}
A_{i2}  & =\underset{s\rightarrow-\theta_{i}}{\lim}\left[  Y^{\prime}\right]
\nonumber\\
& =A_{i1}%
{\textstyle\sum\limits_{j=1,j\neq i}^{n}}
\left(  \frac{2(\theta_{j}-\theta_{i})}{(\theta_{j}-\theta_{i})^{2}+\theta
_{j}}-\frac{3}{\theta_{j}-\theta_{i}}\right)  .\label{EqAi22}%
\end{align}

Eventually,%
\begin{equation}
A_{i3}=\underset{s\rightarrow-\theta_{i}}{\lim}\frac{1}{2}\left(
Y^{^{\prime\prime}}\right) \label{EqAi3}%
\end{equation}

In the same manner, we have

$Y^{\prime\prime}=Y^{\prime}\left(  \frac{3}{s+\theta_{i}}+%
{\textstyle\sum\limits_{j=1}^{n}}
\left(  \frac{2(\theta_{j}+s)}{(\theta_{j}+s)^{2}+\theta_{j}}-\frac
{3}{s+\theta_{j}}\right)  \right)  $

$+\left(  -\frac{3}{\left(  s+\theta_{i}\right)  ^{2}}+%
{\textstyle\sum\limits_{j=1}^{n}}
\left(  -2\frac{\left(  s+\theta_{j}\right)  ^{2}-\theta_{j}}{\left(
(\theta_{j}+s)^{2}+\theta_{j}\right)  ^{2}}+\frac{3}{\left(  s+\theta
_{j}\right)  ^{2}}\right)  \right)  Y.$ Again by substituting in Eq
(\ref{EqAi3}), we get%
\begin{align}
A_{i,3}  & =\underset{s\rightarrow-\theta_{i}}{\lim}\frac{1}{2}\left[
Y^{^{\prime\prime}}\right] \nonumber\\
& =\frac{1}{2}A_{i2}\left(
{\textstyle\sum\limits_{j=1,j\neq i}^{n}}
\left(  \frac{2(\theta_{j}-\theta_{i})}{(\theta_{j}-\theta_{i})^{2}+\theta
_{j}}-\frac{3}{\theta_{j}-\theta_{i}}\right)  \right) \nonumber\\
& +\frac{1}{2}A_{i1}\left(  \frac{2}{\theta_{i}}+%
{\textstyle\sum\limits_{j=1,j\neq i}^{n}}
\left(  -2\frac{\left(  \theta_{j}-\theta_{i}\right)  ^{2}-\theta_{j}}{\left(
(\theta_{j}-\theta_{i})^{2}+\theta_{j}\right)  ^{2}}+\frac{3}{\left(
\theta_{j}-\theta_{i}\right)  ^{2}}\right)  \right) \nonumber\\
& =\frac{1}{2}A_{i1}\left(
{\textstyle\sum\limits_{j=1,j\neq i}^{n}}
\left(  \frac{2(\theta_{j}-\theta_{i})}{(\theta_{j}-\theta_{i})^{2}+\theta
_{j}}-\frac{3}{\theta_{j}-\theta_{i}}\right)  \right)  ^{2}\nonumber\\
& +\frac{1}{2}A_{i1}\left(  \frac{2}{\theta_{i}}+%
{\textstyle\sum\limits_{j=1,j\neq i}^{n}}
\left(  -2\frac{\left(  \theta_{j}-\theta_{i}\right)  ^{2}-\theta_{j}}{\left(
(\theta_{j}-\theta_{i})^{2}+\theta_{j}\right)  ^{2}}+\frac{3}{\left(
\theta_{j}-\theta_{i}\right)  ^{2}}\right)  \right) \label{EqAi33}%
\end{align}

However, by substituting Eq(\ref{HVSeqs}) in the above Laplacian
transformation, we obtain%
\begin{equation}%
\mathcal{L}%
\left\{  f_{S_{n}}(t)\right\}  =\left(  \prod\limits_{i=1}^{n}\frac{\theta
_{i}^{2}}{(1+\theta_{i})}\right)
{\textstyle\sum\limits_{i=1}^{n}}
{\textstyle\sum\limits_{k=1}^{3}}
\frac{A_{ik}}{(\theta_{i}+t)^{4-k}}\label{Eq4}%
\end{equation}
But from Eq(\ref{LaplacErlang}), we have $%
\mathcal{L}%
\left\{  f_{E_{\theta}^{n}}(t)\right\}  =\frac{\theta^{n}}{(\theta+t)^{n}},$
and thus we rewrite Eq(\ref{Eq4}) as%
\begin{align*}%
\mathcal{L}%
\left\{  f_{S_{n}}(t)\right\}   & =\left(  \prod\limits_{i=1}^{n}\frac
{\theta_{i}^{2}}{(1+\theta_{i})}\right)
{\textstyle\sum\limits_{i=1}^{n}}
{\textstyle\sum\limits_{k=1}^{3}}
\frac{A_{ik}}{\theta_{i}^{4-k}}\frac{\theta_{i}^{4-k}}{(\theta_{i}+s)^{4-k}}\\
& =\left(  \prod\limits_{i=1}^{n}\frac{\theta_{i}^{2}}{(1+\theta_{i})}\right)
%
{\textstyle\sum\limits_{i=1}^{n}}
{\textstyle\sum\limits_{k=1}^{3}}
\frac{A_{ik}}{\theta_{i}^{4-k}}%
\mathcal{L}%
\left\{  f_{E_{\theta_{i}}^{4-k}}(t)\right\} \\
& =\left(  \prod\limits_{i=1}^{n}\frac{\theta_{i}^{2}}{(1+\theta_{i})}\right)
%
{\textstyle\sum\limits_{i=1}^{n}}
{\textstyle\sum\limits_{k=1}^{3}}
\frac{A_{ik}}{\theta_{i}^{4-k}}f_{Y_{ik}}(t)\\
& =%
{\textstyle\sum\limits_{i=1}^{n}}
{\textstyle\sum\limits_{k=1}^{3}}
R_{ik}f_{Y_{ik}}(t)
\end{align*}
where $R_{ik}=\left(  \prod\limits_{i=1}^{n}\frac{\theta_{i}^{2}}%
{(1+\theta_{i})}\right)  \frac{A_{ik}}{\theta_{i}^{4-k}},$ and $A_{ik}$ are
defined in Eqs (\ref{EqAi11},\ref{EqAi22},\ref{EqAi33}) respectively, and
$Y_{ik}\thicksim$Erl$\left(  4-k,\theta_{i}\right)  .$
\end{proof}

In the next theorem, we give the CDF, MGF, reliability function and hazard
function expressions of $S_{n}$.

\begin{theorem}
\label{cdfDiffcase}Let $S_{n}\thicksim$HypoXG$(\overrightarrow{\theta})$. Then we have%
\begin{equation}
F_{S_{n}}(t)=%
{\textstyle\sum\limits_{i=1}^{n}}
{\textstyle\sum\limits_{k=1}^{3}}
R_{ik}F_{Y_{ik}}(t)\label{Eq_CDFdiffcase2}%
\end{equation}

\end{theorem}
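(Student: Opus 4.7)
The plan is to deduce the CDF directly from the PDF decomposition established in Theorem \ref{PDF1DiffCase}. Since $S_n \geq 0$ almost surely, for $t \geq 0$ we have $F_{S_n}(t) = \int_0^t f_{S_n}(s)\,ds$, while $F_{S_n}(t) = 0$ for $t < 0$, a case for which the claimed formula is also valid since every Erlang CDF vanishes on $(-\infty,0)$.

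First, I would substitute Eq(\ref{HypoXGPDF2}) into this integral. The coefficients $R_{ik}$ defined in Eq(\ref{CoeffHypoXG2}) depend only on $\theta_1,\ldots,\theta_n$ and not on the integration variable, so they factor out. Because the double sum contains only $3n$ terms, interchanging sum and integral requires no dominated-convergence argument, and linearity of integration yields
\[
F_{S_n}(t) \;=\; \sum_{i=1}^{n} \sum_{k=1}^{3} R_{ik} \int_{0}^{t} f_{Y_{ik}}(s)\,ds.
\]
The second step is to recognize that $\int_0^t f_{Y_{ik}}(s)\,ds = F_{Y_{ik}}(t)$ by the very definition of the CDF of $Y_{ik} \thicksim \mathrm{Erl}(4-k,\theta_i)$, which produces Eq(\ref{Eq_CDFdiffcase2}) immediately.

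There is essentially no obstacle here: the statement is a routine corollary of the PDF representation, driven entirely by the linearity of the operator $f \mapsto \int_0^{(\cdot)} f$, and neither the explicit shape of $R_{ik}$ nor the closed form of $F_{Y_{ik}}$ plays any role in the derivation. An equivalent route would start from the Laplace-transform identity $\mathcal{L}\{f_{S_n}\} = \sum_{i}\sum_{k} R_{ik}\,\mathcal{L}\{f_{Y_{ik}}\}$ obtained midway through the proof of Theorem \ref{PDF1DiffCase} and invert termwise after dividing each component by the appropriate factor to produce a CDF transform, but direct integration is cleaner and avoids reappealing to uniqueness of the inverse Laplace transform.
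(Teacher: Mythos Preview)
Your proposal is correct and matches the paper's own proof essentially line for line: the paper also invokes Theorem~\ref{PDF1DiffCase}, writes $F_{S_n}(t)=\int_0^t f_{S_n}(x)\,dx$, pulls the finite double sum and constants $R_{ik}$ outside the integral, and identifies $\int_0^t f_{Y_{ik}}(x)\,dx=F_{Y_{ik}}(t)$. Your additional remarks on the $t<0$ case and the finiteness of the sum are minor clarifications rather than a different argument.
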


where $R_{ik}$ is defined in Eq (\ref{CoeffHypoXG2}), and $Y_{ik}\thicksim
$Erl$\left(  4-k,\theta_{i}\right)  .$

\begin{proof}
From Theorem $\ref{PDF1DiffCase}$ the PDF of $S_{n}$ is $f_{S_{n}}(t)=%
{\textstyle\sum\limits_{i=1}^{n}}
{\textstyle\sum\limits_{k=1}^{3}}
R_{ik}f_{Y_{ik}}(t).$ On the other hand we have the CDF of $S_{n}$ can be
expressed as%
\[
F_{S_{n}}(t)=%
{\displaystyle\int\limits_{0}^{t}}
f_{S_{n}}(x)dx=%
{\displaystyle\int\limits_{0}^{t}}
{\textstyle\sum\limits_{i=1}^{n}}
{\textstyle\sum\limits_{k=1}^{3}}
R_{ik}f_{Y_{ik}}(x)dx=%
{\textstyle\sum\limits_{i=1}^{n}}
{\textstyle\sum\limits_{k=1}^{3}}
R_{ik}%
{\displaystyle\int\limits_{0}^{t}}
f_{Y_{ik}}(x)dx=%
{\textstyle\sum\limits_{i=1}^{n}}
{\textstyle\sum\limits_{k=1}^{3}}
R_{ik}F_{Y_{ik}}(t)
\]
where, $Y_{ik}\thicksim$Erl$\left(  4-k,\theta_{i}\right)  .$
\end{proof}

\begin{lemma}
\label{coeffDiffcaseis1}Let $R_{ik}$ be the coefficient of HypoXG, then $%
{\textstyle\sum\limits_{i=1}^{n}}
{\textstyle\sum\limits_{k=1}^{3}}
R_{ik}=1$.
\end{lemma}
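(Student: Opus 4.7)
The plan is to exploit the PDF decomposition from Theorem~\ref{PDF1DiffCase} together with the elementary fact that every probability density on $[0,\infty)$ integrates to $1$. Starting from
\[
f_{S_n}(t) = \sum_{i=1}^n \sum_{k=1}^3 R_{ik}\, f_{Y_{ik}}(t),
\]
I would integrate both sides over $[0,\infty)$. The left-hand side integrates to $1$ because $S_n$ is a nonnegative random variable with density $f_{S_n}$. Each $Y_{ik} \sim \mathrm{Erl}(4-k,\theta_i)$ is itself a genuine distribution, so $\int_0^\infty f_{Y_{ik}}(t)\,dt = 1$ for every pair $(i,k)$. Swapping the finite double sum with the integral then yields
\[
1 \;=\; \int_0^\infty f_{S_n}(t)\,dt \;=\; \sum_{i=1}^n \sum_{k=1}^3 R_{ik}\int_0^\infty f_{Y_{ik}}(t)\,dt \;=\; \sum_{i=1}^n \sum_{k=1}^3 R_{ik}.
\]

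Two alternative routes give the same conclusion should one wish to avoid appealing to integration directly. One is to take $t\to\infty$ in the CDF identity of Theorem~\ref{cdfDiffcase} and use $F_{S_n}(\infty)=F_{Y_{ik}}(\infty)=1$. The other is to evaluate the Laplace transform identity $\mathcal{L}\{f_{S_n}\}(t) = \sum_{i,k} R_{ik}\,\mathcal{L}\{f_{Y_{ik}}\}(t)$ at $t=0$, where both sides of each transform reduce to $1$; this is arguably the most natural choice since Theorem~\ref{PDF1DiffCase} was itself proved via Laplace transforms.

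There is no real obstacle here: the identity is essentially a normalisation consistency check, and it follows from nothing more than linearity of integration. The only conceptual point worth flagging is that the coefficients $R_{ik}$ are in general signed, since they arise from a Heaviside partial-fraction decomposition rather than a convex mixture, so $f_{S_n}$ is a \emph{linear combination} (not a probabilistic mixture) of Erlang densities; nevertheless the coefficients must sum to $1$ for the total mass of $f_{S_n}$ to be $1$, which is precisely what the lemma asserts.
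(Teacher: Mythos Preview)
Your proposal is correct and essentially coincides with the paper's argument: the paper invokes the CDF identity of Theorem~\ref{cdfDiffcase} and lets $t\to\infty$, which is exactly the second alternative you list, and your primary route (integrating the PDF identity over $[0,\infty)$) is the same computation in barely different clothing since $\int_0^\infty f_{S_n}(t)\,dt=\lim_{t\to\infty}F_{S_n}(t)$.
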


\begin{proof}
Let $F_{Y_{ik}}(t)$ and $F_{S_{n}}(t)$ be the CDF of $Y_{ik}\thicksim
$Erl$\left(  4-k,\theta_{i}\right)  $ and $S_{n}\thicksim$%
HypoXG$(\overrightarrow{\theta})$ respectively. However the
CDF for any random variable $X$ is $P\left(  X<t\right)  =1$ as $t\rightarrow
+\infty,$ thus $\underset{t\rightarrow+\infty}{\lim}F_{Y_{ik}}(t)=1$ and
$\underset{t\rightarrow+\infty}{\lim}F_{S_{n}}(t)=1,$ but form Theorem
\ref{cdfDiffcase}$\ $we have $F_{S_{n}}(t)=%
{\textstyle\sum\limits_{i=1}^{n}}
{\textstyle\sum\limits_{k=1}^{3}}
R_{ik}F_{Y_{ik}}(t),$ then $\underset{t\rightarrow+\infty}{\lim}F_{S_{n}%
}(t)=\underset{t\rightarrow+\infty}{\lim}%
{\textstyle\sum\limits_{i=1}^{n}}
{\textstyle\sum\limits_{k=1}^{3}}
R_{ik}F_{Y_{ik}}(t)=%
{\textstyle\sum\limits_{i=1}^{n}}
{\textstyle\sum\limits_{k=1}^{3}}
R_{ik}\underset{t\rightarrow+\infty}{\lim}F_{Y_{ik}}(t).$ Hence, $%
{\textstyle\sum\limits_{i=1}^{n}}
{\textstyle\sum\limits_{k=1}^{3}}
R_{ik}=1.$
\end{proof}

\begin{theorem}
\label{MGFdiffcase}Let $S_{n}\thicksim$HypoXG$(\overrightarrow{\theta
})$. Then we have%
\begin{equation}
\Phi_{S_{n}}(t)=%
{\textstyle\sum\limits_{i=1}^{n}}
{\textstyle\sum\limits_{k=1}^{3}}
R_{ik}\Phi_{Y_{ik}}(t)\label{Eq_MGFdiffcase2}%
\end{equation}

\end{theorem}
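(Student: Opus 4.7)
The plan is to mimic exactly the argument used in Theorem \ref{cdfDiffcase} for the CDF, but with the MGF integral instead of the CDF integral. Since Theorem \ref{PDF1DiffCase} already gives us $f_{S_n}(t) = \sum_{i=1}^n \sum_{k=1}^3 R_{ik} f_{Y_{ik}}(t)$ as a finite linear combination of Erlang densities, the MGF statement should reduce to an application of the definition $\Phi_{S_n}(t) = \int_0^\infty e^{tx} f_{S_n}(x)\,dx$ together with linearity of integration.

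Concretely, I would first write $\Phi_{S_n}(t) = \int_0^\infty e^{tx} f_{S_n}(x)\,dx$, substitute the expression from Theorem \ref{PDF1DiffCase}, and then pull the finite double sum outside the integral. Each of the resulting integrals $\int_0^\infty e^{tx} f_{Y_{ik}}(x)\,dx$ is by definition $\Phi_{Y_{ik}}(t)$, the MGF of $Y_{ik} \sim \mathrm{Erl}(4-k,\theta_i)$, which is given in closed form by Eq.~(\ref{MGFErlangg}) as $\theta_i^{4-k}/(\theta_i - t)^{4-k}$ for $t < \theta_i$. This immediately produces the claimed identity.

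The only subtlety worth flagging is the range of validity of $t$: the Erlang MGFs $\Phi_{Y_{ik}}(t)$ exist only for $t < \theta_i$, so the identity is valid for $t < \min_i \theta_i$, which is exactly the domain on which $\Phi_{S_n}$ itself converges (consistent with the Laplace-transform computation in the proof of Theorem \ref{PDF1DiffCase}). The interchange of sum and integral requires no justification beyond finiteness of the sum, since both $i$ and $k$ range over finite index sets.

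I do not expect any real obstacle here: this is essentially a one-line corollary of Theorem \ref{PDF1DiffCase} via the linearity of the MGF operator, analogous to how Theorem \ref{cdfDiffcase} followed by linearity of the integral $\int_0^t$. The proof will be three or four lines and will end by noting that $Y_{ik} \sim \mathrm{Erl}(4-k,\theta_i)$ and that $R_{ik}$ is as in Eq.~(\ref{CoeffHypoXG2}).
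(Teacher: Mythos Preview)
Your proposal is correct and follows essentially the same approach as the paper: start from the PDF decomposition of Theorem \ref{PDF1DiffCase}, write $\Phi_{S_n}(t)$ as the integral of $e^{tx}f_{S_n}(x)$, interchange the finite double sum with the integral, and identify each inner integral as $\Phi_{Y_{ik}}(t)$. Your remarks on the domain $t<\min_i\theta_i$ and on the trivial justification of the sum-integral interchange are more careful than what the paper records, but the underlying argument is identical.
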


\begin{proof}
Referring to Theorem $\ref{PDF1DiffCase}$\ the PDF of $S_{n}$ is $f_{S_{n}%
}(t)=%
{\textstyle\sum\limits_{i=1}^{n}}
{\textstyle\sum\limits_{k=1}^{3}}
R_{ik}f_{Y_{ik}}(t).\ $However, using the MGF expression, we obtain%
\[
\Phi_{S_{n}}(t)=\int\limits_{-\infty}^{+\infty}e^{tx}f_{S_{n}}(x)dx=\int
\limits_{-\infty}^{+\infty}e^{tx}\left(
{\textstyle\sum\limits_{i=1}^{n}}
{\textstyle\sum\limits_{k=1}^{3}}
R_{ik}f_{Y_{ik}}(x)\right)  dx=%
{\textstyle\sum\limits_{i=1}^{n}}
{\textstyle\sum\limits_{k=1}^{3}}
R_{ik}\int\limits_{-\infty}^{+\infty}e^{tx}f_{Y_{ik}}(x)dx
\]
but%
\[
\int\limits_{-\infty}^{+\infty}e^{tx}f_{Y_{ik}}(x)dx=\Phi_{Y_{ik}}\left(
t\right)  ,
\]
thus%
\[
\Phi_{S_{n}}(t)=%
{\textstyle\sum\limits_{i=1}^{n}}
{\textstyle\sum\limits_{k=1}^{3}}
R_{ik}\Phi_{Y_{ik}}(t).
\]
where $\Phi_{Y_{ik}}(t)$ is defined in Eq (\ref{MGFErlangg}).
\end{proof}

\begin{theorem}
Let $S_{n}\thicksim$HypoXG$(\overrightarrow{\theta})$. Then
we have%
\begin{equation}
R_{S_{n}}(t)=%
{\textstyle\sum\limits_{i=1}^{n}}
{\textstyle\sum\limits_{k=1}^{3}}
R_{ik}R_{Y_{ik}}(t)\label{Eq_Reldiffcase2}%
\end{equation}

\end{theorem}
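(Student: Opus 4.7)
The plan is to prove the reliability identity by the standard relation $R_X(t)=1-F_X(t)$ combined with the CDF decomposition from Theorem \ref{cdfDiffcase} and the normalization from Lemma \ref{coeffDiffcaseis1}. The key observation is that once we have a linear combination $F_{S_n}(t)=\sum_{i,k} R_{ik}F_{Y_{ik}}(t)$ with coefficients summing to one, passing to reliabilities is a purely algebraic manipulation.

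Concretely, I would start from $R_{S_n}(t)=1-F_{S_n}(t)$, substitute the expression for $F_{S_n}(t)$ given by Theorem \ref{cdfDiffcase}, and then rewrite the constant $1$ as $\sum_{i=1}^{n}\sum_{k=1}^{3} R_{ik}$ using Lemma \ref{coeffDiffcaseis1}. Factoring $R_{ik}$ from the resulting double sum gives
\[
R_{S_n}(t)=\sum_{i=1}^{n}\sum_{k=1}^{3} R_{ik}\bigl(1-F_{Y_{ik}}(t)\bigr)=\sum_{i=1}^{n}\sum_{k=1}^{3} R_{ik}R_{Y_{ik}}(t),
\]
which is exactly Eq.\ (\ref{Eq_Reldiffcase2}).

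There is essentially no technical obstacle here; the only subtlety worth flagging is that the coefficients $R_{ik}$ need not be nonnegative (they arise from a partial-fraction expansion, so the decomposition is a signed linear combination rather than a genuine mixture). Because of this, one cannot interpret $R_{S_n}$ as a convex combination of Erlang reliabilities, but the identity itself still holds as a formal equality, and the proof above uses only linearity of the integral and the fact that the $R_{ik}$ sum to one. No differentiation, convolution, or Laplace inversion is required, since all the analytic work has already been carried out in Theorems \ref{PDF1DiffCase} and \ref{cdfDiffcase}.
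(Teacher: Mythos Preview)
Your proof is correct and follows essentially the same approach as the paper: both start from $R_{S_n}(t)=1-F_{S_n}(t)$, invoke Theorem~\ref{cdfDiffcase} for the CDF decomposition, and use Lemma~\ref{coeffDiffcaseis1} to absorb the constant $1$ into the sum. The only cosmetic difference is the order of the algebra---the paper substitutes $F_{Y_{ik}}=1-R_{Y_{ik}}$ first and then cancels $1-\sum R_{ik}$, whereas you replace $1$ by $\sum R_{ik}$ first---but the ingredients and logic are identical.
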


\begin{proof}
Using the expression of reliability function, we have%
\[
R_{S_{n}}\left(  t\right)  =1-F_{S_{n}}(t)=1-%
{\textstyle\sum\limits_{i=1}^{n}}
{\textstyle\sum\limits_{k=1}^{3}}
R_{ik}F_{Y_{ik}}(t),
\]
but%
\[
F_{Y_{ik}}(t)=1-R_{Y_{ik}}(t),
\]
then,%
\begin{align*}
R_{S_{n}}\left(  t\right)   & =1-%
{\textstyle\sum\limits_{i=1}^{n}}
{\textstyle\sum\limits_{k=1}^{3}}
R_{ik}\left(  1-R_{Y_{ik}}(t)\right) \\
& =1-%
{\textstyle\sum\limits_{i=1}^{n}}
{\textstyle\sum\limits_{k=1}^{3}}
R_{ik}+%
{\textstyle\sum\limits_{i=1}^{n}}
{\textstyle\sum\limits_{k=1}^{3}}
R_{ik}R_{Y_{ik}}(t)
\end{align*}
but by using Lemma \ref{coeffDiffcaseis1} we have $%
{\textstyle\sum\limits_{i=1}^{n}}
{\textstyle\sum\limits_{k=1}^{3}}
R_{ik}=1,$ hence $R_{S_{n}}(t)=\sum\limits_{i=0}^{n}A_{i}R_{Y_{i}}(t).$
\end{proof}

\begin{theorem}
Let $S_{n}\thicksim$HypoXG$(\overrightarrow{\theta})$. Then
we have%
\begin{equation}
h_{S_{n}}(t)=\frac{%
{\textstyle\sum\limits_{i=1}^{n}}
{\textstyle\sum\limits_{k=1}^{3}}
R_{ik}h_{Y_{ik}}\left(  t\right)  R_{Y_{ik}}(t)}{%
{\textstyle\sum\limits_{i=1}^{n}}
{\textstyle\sum\limits_{k=1}^{3}}
R_{ik}R_{Y_{ik}}(t)}\label{Eq_hazarddiffcase2}%
\end{equation}

\end{theorem}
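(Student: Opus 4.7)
The plan is to derive this expression directly from the definition of the hazard function, namely $h_{S_n}(t) = f_{S_n}(t)/R_{S_n}(t)$, and then rewrite both the numerator and denominator in terms of the Erlang components. The denominator is already handled: from the previous theorem we have $R_{S_n}(t) = \sum_{i=1}^{n}\sum_{k=1}^{3} R_{ik} R_{Y_{ik}}(t)$, so no further work is needed on the bottom of the fraction.

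For the numerator, I would start from Theorem \ref{PDF1DiffCase}, which gives $f_{S_n}(t) = \sum_{i=1}^{n}\sum_{k=1}^{3} R_{ik} f_{Y_{ik}}(t)$. The key step is to apply the elementary identity $f_{Y_{ik}}(t) = h_{Y_{ik}}(t)\, R_{Y_{ik}}(t)$, which holds for each Erlang component $Y_{ik} \sim \mathrm{Erl}(4-k,\theta_i)$ as a consequence of the definition $h_{Y_{ik}}(t) = f_{Y_{ik}}(t)/R_{Y_{ik}}(t)$. Substituting term by term then yields $f_{S_n}(t) = \sum_{i=1}^{n}\sum_{k=1}^{3} R_{ik}\, h_{Y_{ik}}(t)\, R_{Y_{ik}}(t)$.

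Dividing the two expressions produces exactly \eqref{Eq_hazarddiffcase2}. There is no genuine obstacle here: the proof is essentially a bookkeeping exercise that combines the PDF decomposition of Theorem \ref{PDF1DiffCase}, the survival decomposition from the preceding theorem, and the pointwise definition of the hazard rate applied componentwise to the Erlang terms. The only thing to be careful about is noting that $R_{Y_{ik}}(t) > 0$ for $t \geq 0$, so the substitution $f_{Y_{ik}} = h_{Y_{ik}} R_{Y_{ik}}$ is valid wherever $h_{S_n}(t)$ is itself defined, which keeps the manipulation rigorous on the relevant domain.
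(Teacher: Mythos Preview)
Your proposal is correct and follows essentially the same approach as the paper: start from $h_{S_n}(t)=f_{S_n}(t)/R_{S_n}(t)$, plug in the PDF decomposition from Theorem~\ref{PDF1DiffCase} and the reliability decomposition from the preceding theorem, then rewrite each $f_{Y_{ik}}$ as $h_{Y_{ik}}R_{Y_{ik}}$. Your added remark that $R_{Y_{ik}}(t)>0$ on the relevant domain is a welcome bit of rigor the paper omits.
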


\begin{proof}
Referring to the expression of the hazard function, we have%
\[
h_{S_{n}}(t)=\frac{f_{S_{n}}(t)}{R_{S_{n}}(t)}=\frac{%
{\textstyle\sum\limits_{i=1}^{n}}
{\textstyle\sum\limits_{k=1}^{3}}
R_{ik}f_{Y_{ik}}(t)}{%
{\textstyle\sum\limits_{i=1}^{n}}
{\textstyle\sum\limits_{k=1}^{3}}
R_{ik}R_{Y_{ik}}(t)}%
\]
but $f_{Y_{ik}}\left(  t\right)  =h_{Y_{ik}}\left(  t\right)  R_{Y_{ik}%
}\left(  t\right)  ,$ thus%
\[
h_{S_{n}}(t)=\frac{%
{\textstyle\sum\limits_{i=1}^{n}}
{\textstyle\sum\limits_{k=1}^{3}}
R_{ik}h_{Y_{ik}}\left(  t\right)  R_{Y_{ik}}(t)}{%
{\textstyle\sum\limits_{i=1}^{n}}
{\textstyle\sum\limits_{k=1}^{3}}
R_{ik}R_{Y_{ik}}(t)}.
\]

\end{proof}

\begin{theorem}
Let $S_{n}\thicksim$HypoXG$(\overrightarrow{\theta})$. Then
we have%
\begin{equation}
E[S_{n}^{k}]=%
{\textstyle\sum\limits_{i=1}^{n}}
{\textstyle\sum\limits_{k=1}^{3}}
R_{ik}E[Y_{ik}^{k}]\label{Eq_ExpKdiffcase2}%
\end{equation}

\end{theorem}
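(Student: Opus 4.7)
The plan is to mirror the proof of Theorem~\ref{MGFdiffcase} by replacing the factor $e^{tx}$ with $x^k$ and invoking linearity of the integral. First I would write the $k$th moment as $E[S_n^k] = \int_{0}^{+\infty} x^k f_{S_n}(x)\,dx$, using the fact that $S_n$ is supported on $[0,\infty)$. Second, I would substitute the PDF decomposition from Theorem~\ref{PDF1DiffCase}, namely $f_{S_n}(x) = \sum_{i=1}^{n} \sum_{j=1}^{3} R_{ij} f_{Y_{ij}}(x)$, where I have renamed the inner summation index to $j$ to avoid the clash with the moment order $k$ appearing in the statement. Third, since the double sum is finite it commutes with the integral, so
\[
E[S_n^k] \;=\; \sum_{i=1}^{n}\sum_{j=1}^{3} R_{ij} \int_{0}^{+\infty} x^k f_{Y_{ij}}(x)\,dx.
\]
Finally, I would recognize $\int_{0}^{+\infty} x^k f_{Y_{ij}}(x)\,dx = E[Y_{ij}^k]$ by definition of the raw moment, which yields the claimed identity after relabeling $j$ back to $k$ in line with the paper's notation.

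There is no genuine obstacle here: exchanging a finite double sum with an integral is immediate, and the proof reduces to a one-line consequence of Theorem~\ref{PDF1DiffCase} combined with linearity of expectation. The only subtlety worth flagging in a careful write-up is the notational collision in the paper's statement of the identity, where the symbol $k$ is used simultaneously as the order of the moment and as the inner summation index; renaming the dummy variable, as above, removes any ambiguity. Optionally one could remark that the individual moments $E[Y_{ij}^k]$ are available in closed form from the well-known Erlang formula $E[Y_{ij}^k] = (4-j)(5-j)\cdots(3-j+k)/\theta_i^k$, but this is not required to establish the stated decomposition.
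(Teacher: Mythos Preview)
Your argument is correct, but it differs from the paper's own proof in its starting point. The paper does not integrate against the PDF directly; instead it invokes Theorem~\ref{MGFdiffcase}, writes $\Phi_{S_n}(t)=\sum_{i}\sum_{k} R_{ik}\Phi_{Y_{ik}}(t)$, differentiates both sides $k$ times, and evaluates at $t=0$ using $E[X^k]=\left.\frac{d^k\Phi_X(t)}{dt^k}\right|_{t=0}$. Your route---substituting the PDF decomposition of Theorem~\ref{PDF1DiffCase} into $\int_0^\infty x^k f_{S_n}(x)\,dx$ and swapping the finite sum with the integral---is slightly more elementary, since it avoids the detour through the moment generating function and the implicit assumption that one may differentiate under the (infinite) sum defining $\Phi_{S_n}$. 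Both arguments ultimately rest on the same linear decomposition and are equally short; your version has the minor advantage of not compounding the notational clash you flagged, whereas the paper's proof inherits the same double use of $k$ in the differentiation step as well.
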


\begin{proof}
From Theorem \ref{MGFdiffcase}, we have $\Phi_{S_{n}}(t)=%
{\textstyle\sum\limits_{i=1}^{n}}
{\textstyle\sum\limits_{k=1}^{3}}
R_{ik}\Phi_{Y_{ik}}(t).$ But the moment of order $k$ of $S_{n}$ is expressed
as $E[S_{n}^{k}]=\left.  \frac{d^{k}\Phi_{S_{n}}(t)}{dt^{k}}\right\vert
_{t=0}$ then we obtain $E[S_{n}^{k}]=\left.  \frac{d^{k}\Phi_{S_{n}}%
(t)}{dt^{k}}\right\vert _{t=0}=%
{\textstyle\sum\limits_{i=1}^{n}}
{\textstyle\sum\limits_{k=1}^{3}}
R_{ik}\left.  \frac{d^{k}\Phi_{Y_{ik}}(t)}{dt^{k}}\right\vert _{t=0}=%
{\textstyle\sum\limits_{i=1}^{n}}
{\textstyle\sum\limits_{k=1}^{3}}
R_{ik}E[Y_{ik}^{k}].$
\end{proof}

\section{Maximum Likelihood Estimation of Parameters in HypoXGamma
Distribution}

In this section, we examine the estimation of the parameters of the HypoXGamma
by maximum likelihood estimators. Next, we observe in an applications a real
data set which shows that this model provides better fit to the data as
compared to the sum of the Exponential distributions, the Hypoexponential models

\begin{theorem}
Let $S_{n}\sim$HypoXG($\overrightarrow{\theta}$)$,$ $\overrightarrow{\theta
}=(\theta_{1},\theta_{2},...,\theta_{n})$ and $t_{1},t_{2},...,t_{N}$ be the
observed values. Then the maximum likelihood estimators of $\theta_{p}$, for
$p=1,2,...,n$. is denoted by $\widehat{\theta}_{p}$ that verify the following
implicit equations%
\begin{equation}%
{\textstyle\sum\limits_{u=1}^{N}}
\frac{\frac{N\left(  2+\theta_{p}\right)  }{\theta_{p}\left(  1+\theta
_{p}\right)  }+%
{\textstyle\sum\limits_{u=1}^{N}}
\frac{%
{\textstyle\sum\limits_{i=1}^{n}}
\frac{\partial}{\partial\theta_{p}}\left(  e^{-\theta_{i}t_{u}}\left(
\frac{A_{i,1}t_{u}^{2}}{2}+A_{i,2}t_{u}+A_{i,3}\right)  \right)  }{%
{\textstyle\sum\limits_{i=1}^{n}}
e^{-\theta_{i}t_{u}}\left(  \frac{A_{i,1}t_{u}^{2}}{2}+A_{i,2}t_{u}%
+A_{i,3}\right)  }}{%
{\textstyle\sum\limits_{u=1}^{N}}
\log K+%
{\textstyle\sum\limits_{u=1}^{N}}
\log\left[
{\textstyle\sum\limits_{i=1}^{n}}
\left(  \frac{A_{i,1}}{\theta_{i}^{3}}\frac{\theta_{i}^{3}t_{u}^{2}%
e^{-\theta_{i}t_{u}}}{2}+\frac{A_{i,2}}{\theta_{i}^{2}}\theta_{i}^{2}%
t_{u}e^{-\theta_{i}t_{u}}+\frac{A_{i,3}}{\theta_{i}}\theta_{i}e^{-\theta
_{i}t_{u}}\right)  \right]  }=0\label{EQ_impliciteFct2}%
\end{equation}
where$\ K=%
{\textstyle\prod\limits_{i=1}^{n}}
\frac{\theta_{i}^{2}}{(1+\theta_{i})},$ and $A_{i,1},A_{i,2}$ and $A_{i,3}$
defined in Theorem \ref{PDF1DiffCase}.
\end{theorem}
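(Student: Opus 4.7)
The plan is to follow the standard maximum likelihood recipe applied to the density obtained in Theorem \ref{PDF1DiffCase}: form the likelihood $L(\overrightarrow{\theta}) = \prod_{u=1}^{N} f_{S_{n}}(t_{u})$, take logarithms, differentiate coordinatewise in $\theta_{p}$, and set the score equal to zero. The only preparatory move is to rewrite $f_{S_{n}}$ in a shape that isolates the global factor $K = \prod_{l=1}^{n} \theta_{l}^{2}/(1+\theta_{l})$, so that its logarithmic derivative can be read off cleanly.

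First I would substitute the explicit Erlang PDFs $f_{Y_{ik}}(t) = \theta_{i}^{4-k}\, t^{3-k}\, e^{-\theta_{i} t}/(3-k)!$ into the representation $f_{S_{n}}(t) = \sum_{i,k} R_{ik} f_{Y_{ik}}(t)$ from Theorem \ref{PDF1DiffCase}. Since $R_{ik} = A_{ik}\,\theta_{i}^{k-4}\, K$, the powers of $\theta_{i}$ inside each summand cancel and the density collapses to
\begin{equation*}
f_{S_{n}}(t) \;=\; K \sum_{i=1}^{n} e^{-\theta_{i} t}\!\left(\frac{A_{i,1}}{2}\, t^{2} + A_{i,2}\, t + A_{i,3}\right).
\end{equation*}
This is the key reorganization: the factor $K$ carries the only $\theta_{p}$-dependence whose logarithmic derivative is entirely elementary, while all remaining parametric dependence (through the $A_{i,k}$ and the exponentials) is concentrated inside the sum over $i$.

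Taking logarithms and summing over the data then gives the log-likelihood
\begin{equation*}
\ell(\overrightarrow{\theta}) \;=\; N\log K \;+\; \sum_{u=1}^{N}\log\!\left[\,\sum_{i=1}^{n} e^{-\theta_{i} t_{u}}\!\left(\frac{A_{i,1}}{2}\, t_{u}^{2} + A_{i,2}\, t_{u} + A_{i,3}\right)\right].
\end{equation*}
Differentiating the first term in $\theta_{p}$ uses only $\partial_{\theta_{p}} \log K = 2/\theta_{p} - 1/(1+\theta_{p}) = (2+\theta_{p})/[\theta_{p}(1+\theta_{p})]$, producing the leading contribution $N(2+\theta_{p})/[\theta_{p}(1+\theta_{p})]$ to the score. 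Differentiating the second term is a direct chain rule: $\partial/\partial\theta_{p}$ moves inside each logarithm and then inside each $\sum_{i}$, producing the ratio of sums that appears in (\ref{EQ_impliciteFct2}). Setting $\partial\ell/\partial\theta_{p}=0$ then reproduces the implicit equation claimed.

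The main obstacle is really just bookkeeping rather than mathematics: the coefficients $A_{i,k}$ depend on $\overrightarrow{\theta}$ through the intricate rational expressions in (\ref{eqsAip}), so the partials $\partial A_{i,k}/\partial\theta_{p}$ could be expanded but would offer no structural simplification. The theorem sidesteps this by leaving $\partial/\partial\theta_{p}$ acting symbolically on each summand, so the only substantive step in the proof is to verify that $K$ factors out of $f_{S_{n}}$ cleanly; after that the derivation of (\ref{EQ_impliciteFct2}) is routine calculus.
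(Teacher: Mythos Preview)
Your proposal is correct and follows essentially the same route as the paper: factor the global constant $K$ out of $f_{S_n}$, form the log-likelihood $\ell(\overrightarrow{\theta})=N\log K+\sum_u\log[\cdot]$, compute $\partial_{\theta_p}\log K=(2+\theta_p)/[\theta_p(1+\theta_p)]$, and apply the chain rule to the remaining sum to obtain the score equation. The only difference is that the paper, after reaching the implicit equation, goes on to write out the partial derivatives $\partial A_{i,k}/\partial\theta_p$ explicitly in the two cases $i=p$ and $i\neq p$; since the theorem statement leaves these derivatives symbolic, your decision to stop there is entirely adequate for establishing the claim.
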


\begin{proof}
Since $S_{n}\sim$HypoXG($\overrightarrow{\theta}$), with $\overrightarrow
{\theta}=(\theta_{1},\theta_{2},...,\theta_{n})$ and $t_{1},t_{2},...,t_{N}$
the observed values. Then from Theorem \ref{PDF1DiffCase}, the probability
density function of $S_{n}$ is given as%
\begin{align*}
f_{S_{n}}(t)  & =%
{\textstyle\sum\limits_{i=1}^{n}}
{\textstyle\sum\limits_{k=1}^{3}}
R_{ik}f_{Y_{ik}}(t)\\
& =%
{\textstyle\sum\limits_{i=1}^{n}}
R_{i1}\frac{\theta_{i}^{3}t^{2}e^{-\theta_{i}t}}{2}+R_{i2}\theta_{i}%
^{2}te^{-\theta_{i}t}+R_{i3}\theta_{i}e^{-\theta_{i}t}%
\end{align*}
Now the likelihood function is given by%
\[
L(\overrightarrow{\theta})=\prod\limits_{u=1}^{N}%
{\textstyle\sum\limits_{i=1}^{n}}
R_{i1}\frac{\theta_{i}^{3}t_{u}^{2}e^{-\theta_{i}t_{u}}}{2}+R_{i2}\theta
_{i}^{2}t_{u}e^{-\theta_{i}t_{u}}+R_{i3}\theta_{i}e^{-\theta_{i}t_{u}}%
\]
with%
\[
R_{i1}=K\frac{A_{i,1}}{\theta_{i}^{3}},\ R_{i2}=K\frac{A_{i,2}}{\theta_{i}%
^{2}},\ R_{i3}=K\frac{A_{i,3}}{\theta_{i}}%
\]
where$\ K=%
{\textstyle\prod\limits_{i=1}^{n}}
\frac{\theta_{i}^{2}}{(1+\theta_{i})},$ and $A_{ik}$ defined in Theorem
\ref{PDF1DiffCase}, then%
\[
L(\overrightarrow{\theta})=\prod\limits_{u=1}^{N}K%
{\textstyle\sum\limits_{i=1}^{n}}
\left(  \frac{A_{i,1}}{\theta_{i}^{3}}\frac{\theta_{i}^{3}t_{u}^{2}%
e^{-\theta_{i}t_{u}}}{2}+\frac{A_{i,2}}{\theta_{i}^{2}}\theta_{i}^{2}%
t_{u}e^{-\theta_{i}t_{u}}+\frac{A_{i,3}}{\theta_{i}}\theta_{i}e^{-\theta
_{i}t_{u}}\right)
\]
thus the log-likelihood function is given by%
\begin{align*}
l(\overrightarrow{\theta})  & =\log\left(  L(\overrightarrow{\theta})\right)
=%
{\textstyle\sum\limits_{u=1}^{N}}
\log\left[  K%
{\textstyle\sum\limits_{i=1}^{n}}
\left(  \frac{A_{i,1}}{\theta_{i}^{3}}\frac{\theta_{i}^{3}t_{u}^{2}%
e^{-\theta_{i}t_{u}}}{2}+\frac{A_{i,2}}{\theta_{i}^{2}}\theta_{i}^{2}%
t_{u}e^{-\theta_{i}t_{u}}+\frac{A_{i,3}}{\theta_{i}}\theta_{i}e^{-\theta
_{i}t_{u}}\right)  \right]  \\
& =%
{\textstyle\sum\limits_{u=1}^{N}}
\log K+%
{\textstyle\sum\limits_{u=1}^{N}}
\log\left[
{\textstyle\sum\limits_{i=1}^{n}}
\left(  \frac{A_{i,1}}{\theta_{i}^{3}}\frac{\theta_{i}^{3}t_{u}^{2}%
e^{-\theta_{i}t_{u}}}{2}+\frac{A_{i,2}}{\theta_{i}^{2}}\theta_{i}^{2}%
t_{u}e^{-\theta_{i}t_{u}}+\frac{A_{i,3}}{\theta_{i}}\theta_{i}e^{-\theta
_{i}t_{u}}\right)  \right]  .
\end{align*}
The nonlinear log-likelihood equations are listed below%
\begin{equation}
\frac{\partial l(\overrightarrow{\theta})}{\partial\theta_{p}}=%
{\textstyle\sum\limits_{u=1}^{N}}
\frac{\frac{\partial}{\partial\theta_{p}}\left(
{\textstyle\sum\limits_{u=1}^{N}}
\log K+%
{\textstyle\sum\limits_{u=1}^{N}}
\log\left[
{\textstyle\sum\limits_{i=1}^{n}}
\left(  \frac{A_{i,1}}{\theta_{i}^{3}}\frac{\theta_{i}^{3}t_{u}^{2}%
e^{-\theta_{i}t_{u}}}{2}+\frac{A_{i,2}}{\theta_{i}^{2}}\theta_{i}^{2}%
t_{u}e^{-\theta_{i}t_{u}}+\frac{A_{i,3}}{\theta_{i}}\theta_{i}e^{-\theta
_{i}t_{u}}\right)  \right]  \right)  }{%
{\textstyle\sum\limits_{u=1}^{N}}
\log K+%
{\textstyle\sum\limits_{u=1}^{N}}
\log\left[
{\textstyle\sum\limits_{i=1}^{n}}
\left(  \frac{A_{i,1}}{\theta_{i}^{3}}\frac{\theta_{i}^{3}t_{u}^{2}%
e^{-\theta_{i}t_{u}}}{2}+\frac{A_{i,2}}{\theta_{i}^{2}}\theta_{i}^{2}%
t_{u}e^{-\theta_{i}t_{u}}+\frac{A_{i,3}}{\theta_{i}}\theta_{i}e^{-\theta
_{i}t_{u}}\right)  \right]  }=0\label{eq_maxdiffcase2}%
\end{equation}
for $p=1,2,...,n$.

Now, we investigate the numerator. We have%
\begin{align*}%
{\textstyle\sum\limits_{u=1}^{N}}
\log K  & =%
{\textstyle\sum\limits_{u=1}^{N}}
\log\left[
{\textstyle\prod\limits_{i=1}^{n}}
\frac{\theta_{i}^{2}}{(1+\theta_{i})}\right] \\
& =N\left(  2\log\left(  \theta_{p}\right)  -\log\left(  1+\theta_{p}\right)
\right)  +N\underset{i\neq p}{%
{\textstyle\sum\limits_{i=1}^{n}}
}\left(  2\log\left(  \theta_{i}\right)  -\log\left(  1+\theta_{i}\right)
\right)
\end{align*}
then%
\[
\frac{\partial}{\partial\theta_{p}}\left(
{\textstyle\sum\limits_{u=1}^{N}}
\log K\right)  =\frac{N\left(  2+\theta_{p}\right)  }{\theta_{p}\left(
1+\theta_{p}\right)  }.
\]
Moreover,%
\begin{align*}
& \frac{\partial}{\partial\theta_{p}}\left(
{\textstyle\sum\limits_{u=1}^{N}}
\log\left[
{\textstyle\sum\limits_{i=1}^{n}}
e^{-\theta_{i}t_{u}}\left(  \frac{A_{i,1}t_{u}^{2}}{2}+A_{i,2}t_{u}%
+A_{i,3}\right)  \right]  \right) \\
& =%
{\textstyle\sum\limits_{u=1}^{N}}
\frac{%
{\textstyle\sum\limits_{i=1}^{n}}
\frac{\partial}{\partial\theta_{p}}\left(  e^{-\theta_{i}t_{u}}\left(
\frac{A_{i,1}t_{u}^{2}}{2}+A_{i,2}t_{u}+A_{i,3}\right)  \right)  }{%
{\textstyle\sum\limits_{i=1}^{n}}
e^{-\theta_{i}t_{u}}\left(  \frac{A_{i,1}t_{u}^{2}}{2}+A_{i,2}t_{u}%
+A_{i,3}\right)  }%
\end{align*}
Again, for the numerator we have%
\begin{align}
& \frac{\partial}{\partial\theta_{p}}\left(  e^{-\theta_{i}t_{u}}\left(
\frac{A_{i,1}t_{u}^{2}}{2}+A_{i,2}t_{u}+A_{i,3}\right)  \right) \nonumber\\
& =\left\{
\begin{tabular}
[c]{ll}%
$\frac{t_{u}^{2}}{2}\frac{\partial\left(  A_{p,1}e^{-\theta_{p}t_{u}}\right)
}{\partial\theta_{p}}+t_{u}\frac{\partial\left(  A_{p,2}e^{-\theta_{p}t_{u}%
}\right)  }{\partial\theta_{p}}+\frac{\partial\left(  A_{p,3}e^{-\theta
_{p}t_{u}}\right)  }{\partial\theta_{p}}$ & if $i=p $\\
$e^{-\theta_{i}t_{u}}\left(  \frac{t_{u}^{2}}{2}\frac{\partial A_{i,1}%
}{\partial\theta_{p}}+t_{u}\frac{\partial A_{i,2}}{\partial\theta_{p}}%
+\frac{\partial A_{i,3}}{\partial\theta_{p}}\right)  $ & if $i\neq p$%
\end{tabular}
\right. \label{1}%
\end{align}
then, if $i=p,$ we have

$A_{p,1}=%
{\textstyle\prod\limits_{j=1,j\neq p}^{n}}
\left(  \frac{(\theta_{j}-\theta_{p})^{2}+\theta_{j}}{(\theta_{j}-\theta
_{p})^{3}}\right)  ,$ so let $\ln A_{p,1}=%
{\textstyle\sum\limits_{j=1,j\neq p}}
\left(  \ln\left(  (\theta_{j}-\theta_{p})^{2}+\theta_{j}\right)  -3\ln
(\theta_{j}-\theta_{p})\right)  ,$

thus $\frac{\frac{\partial A_{p,1}}{\partial\theta_{p}}}{A_{p,1}}=%
{\textstyle\sum\limits_{j=1,j\neq p}^{n}}
\frac{(\theta_{j}-\theta_{p})^{2}+3\theta_{j}}{\left(  (\theta_{j}-\theta
_{p})^{2}+\theta_{j}\right)  \left(  \theta_{j}-\theta_{p}\right)  }$ and
$\frac{\partial A_{p,1}}{\partial\theta_{p}}=A_{p,1}%
{\textstyle\sum\limits_{j=1,j\neq p}^{n}}
\frac{(\theta_{j}-\theta_{p})^{2}+3\theta_{j}}{\left(  (\theta_{j}-\theta
_{p})^{2}+\theta_{j}\right)  \left(  \theta_{j}-\theta_{p}\right)  },$ thus%
\[
\frac{\partial A_{p,1}e_{u}^{-\theta_{p}t_{u}}}{\partial\theta_{p}}%
=A_{p,1}e_{u}^{-\theta_{p}t_{u}}\left(  -t_{u}+%
{\textstyle\sum\limits_{j=1,j\neq p}^{n}}
\frac{(\theta_{j}-\theta_{p})^{2}+3\theta_{j}}{\left(  (\theta_{j}-\theta
_{p})^{2}+\theta_{j}\right)  \left(  \theta_{j}-\theta_{p}\right)  }\right)  .
\]
In the same manner, we get%
\[
\frac{\partial A_{p,2}e_{u}^{-\theta_{p}t_{u}}}{\partial\theta_{p}}%
=A_{p,2}e_{u}^{-\theta_{p}t_{u}}\left(  -t_{u}+V\left(  \theta_{p}\right)
\right)
\]
\qquad where $V\left(  \theta_{p}\right)  =%
{\textstyle\sum\limits_{j=1,j\neq p}^{n}}
\frac{(\theta_{j}-\theta_{p})^{2}+3\theta_{j}}{\left(  (\theta_{j}-\theta
_{p})^{2}+\theta_{j}\right)  \left(  \theta_{j}-\theta_{p}\right)  }+\frac{%
{\textstyle\sum\limits_{j=1,j\neq p}^{n}}
\left(  \frac{4(\theta_{j}-\theta_{p})^{2}}{\left(  (\theta_{j}-\theta
_{p})^{2}+\theta_{j}\right)  ^{2}}-\frac{2}{\left(  \theta_{j}-\theta
_{p}\right)  ^{2}+\theta_{j}}-\frac{3}{\left(  \theta_{j}-\theta_{p}\right)
^{2}}\right)  }{%
{\textstyle\sum\limits_{j=1,j\neq p}^{n}}
\left(  \frac{2(\theta_{j}-\theta_{p})}{(\theta_{j}-\theta_{p})^{2}+\theta
_{j}}-\frac{3}{\theta_{j}-\theta_{p}}\right)  }$

and%
\[
\frac{\partial A_{p,3}e_{u}^{-\theta_{p}t_{u}}}{\partial\theta_{p}}%
=A_{p,3}e_{u}^{-\theta_{p}t_{u}}\left(  -t_{u}+G\left(  \theta_{p}\right)
\right)
\]

where $G\left(  \theta_{p}\right)  =%
{\textstyle\sum\limits_{j=1,j\neq p}^{n}}
\frac{(\theta_{j}-\theta_{p})^{2}+3\theta_{j}}{\left(  (\theta_{j}-\theta
_{p})^{2}+\theta_{j}\right)  \left(  \theta_{j}-\theta_{p}\right)  }$

$+\frac{2\left(
{\textstyle\sum\limits_{j=1,j\neq p}^{n}}
\left(  \frac{4(\theta_{j}-\theta_{p})^{2}}{\left(  (\theta_{j}-\theta
_{p})^{2}+\theta_{j}\right)  ^{2}}-\frac{2}{\left(  \theta_{j}-\theta
_{p}\right)  ^{2}+\theta_{j}}-\frac{3}{\left(  \theta_{j}-\theta_{p}\right)
^{2}}\right)  \right)  \left(
{\textstyle\sum\limits_{j=1,j\neq p}^{n}}
\left(  \frac{2(\theta_{j}-\theta_{p})}{(\theta_{j}-\theta_{p})^{2}+\theta
_{j}}-\frac{3}{\theta_{j}-\theta_{p}}\right)  \right)  }{\left(
{\textstyle\sum\limits_{j=1,j\neq p}^{n}}
\left(  \frac{2(\theta_{j}-\theta_{p})}{(\theta_{j}-\theta_{p})^{2}+\theta
_{j}}-\frac{3}{\theta_{j}-\theta_{p}}\right)  \right)  ^{2}+\left(  \frac
{2}{\theta_{p}}+%
{\textstyle\sum\limits_{j=1,j\neq p}^{n}}
\left(  -2\frac{\left(  \theta_{j}-\theta_{p}\right)  ^{2}-\theta_{j}}{\left(
(\theta_{j}-\theta_{p})^{2}+\theta_{j}\right)  ^{2}}+\frac{3}{\left(
\theta_{j}-\theta_{p}\right)  ^{2}}\right)  \right)  }$

$+\frac{-\frac{2}{\theta_{p}^{2}}+%
{\textstyle\sum\limits_{j=1,j\neq p}^{n}}
\left(  -8\frac{\left(  \left(  \theta_{j}-\theta_{p}\right)  ^{2}-\theta
_{j}\right)  \left(  \theta_{j}-\theta_{p}\right)  }{\left(  (\theta
_{j}-\theta_{p})^{2}+\theta_{j}\right)  ^{3}}+\frac{4(\theta_{j}-\theta_{p}%
)}{\left(  (\theta_{j}-\theta_{p})^{2}+\theta_{j}\right)  ^{2}}+\frac
{6}{\left(  \theta_{j}-\theta_{p}\right)  ^{3}}\right)  }{\left(
{\textstyle\sum\limits_{j=1,j\neq p}^{n}}
\left(  \frac{2(\theta_{j}-\theta_{p})}{(\theta_{j}-\theta_{p})^{2}+\theta
_{j}}-\frac{3}{\theta_{j}-\theta_{p}}\right)  \right)  ^{2}+\left(  \frac
{2}{\theta_{p}}+%
{\textstyle\sum\limits_{j=1,j\neq p}^{n}}
\left(  -2\frac{\left(  \theta_{j}-\theta_{p}\right)  ^{2}-\theta_{j}}{\left(
(\theta_{j}-\theta_{p})^{2}+\theta_{j}\right)  ^{2}}+\frac{3}{\left(
\theta_{j}-\theta_{p}\right)  ^{2}}\right)  \right)  }.$

Now, $i\neq p,$ we have

$A_{i,1}=%
{\textstyle\prod\limits_{j=1\vee j=p,j\neq i}^{n}}
\left(  \frac{(\theta_{j}-\theta_{i})^{2}+\theta_{j}}{(\theta_{j}-\theta
_{i})^{3}}\right)  =\frac{(\theta_{p}-\theta_{i})^{2}+\theta_{p}}{(\theta
_{p}-\theta_{i})^{3}}%
{\textstyle\prod\limits_{j=1,j\neq i,p}^{n}}
\left(  \frac{(\theta_{j}-\theta_{i})^{2}+\theta_{j}}{(\theta_{j}-\theta
_{i})^{3}}\right)  ,$ then%
\begin{align}
\frac{\partial A_{i,1}}{\partial\theta_{p}}  & =%
{\textstyle\prod\limits_{j=1,j\neq i,p}^{n}}
\left(  \frac{(\theta_{j}-\theta_{i})^{2}+\theta_{j}}{(\theta_{j}-\theta
_{i})^{3}}\right)  \frac{\partial}{\partial\theta_{p}}\left(  \frac
{(\theta_{p}-\theta_{i})^{2}+\theta_{p}}{(\theta_{p}-\theta_{i})^{3}}\right)
\nonumber\\
& =\frac{-\theta_{i}-2\theta_{p}-\left(  \theta_{i}-\theta_{p}\right)  ^{2}%
}{\left(  \theta_{i}-\theta_{p}\right)  ^{4}}%
{\textstyle\prod\limits_{j=1,j\neq i,p}^{n}}
\left(  \frac{(\theta_{j}-\theta_{i})^{2}+\theta_{j}}{(\theta_{j}-\theta
_{i})^{3}}\right) \label{Eq_AI1 i diff p}%
\end{align}
and again using the same manner, we get%
\begin{align*}
\frac{\partial A_{i,2}}{\partial\theta_{p}}  & =(\left(  \frac{2(\theta
_{j}-\theta_{p})}{(\theta_{j}-\theta_{p})^{2}+\theta_{j}}-\frac{3}{\theta
_{j}-\theta_{p}}\right)  \left(  \frac{-\theta_{i}-2\theta_{p}-\left(
\theta_{i}-\theta_{p}\right)  ^{2}}{\left(  \theta_{i}-\theta_{p}\right)
^{4}}\right)  +\\
& \left(  \frac{(\theta_{j}-\theta_{p})^{2}+\theta_{j}}{(\theta_{j}-\theta
_{p})^{3}}\right)  \left(  \frac{4(\theta_{j}-\theta_{p})^{2}}{\left(
(\theta_{j}-\theta_{p})^{2}+\theta_{j}\right)  ^{2}}-\frac{2}{\left(
\theta_{j}-\theta_{p}\right)  ^{2}+\theta_{j}}-\frac{3}{\left(  \theta
_{j}-\theta_{p}\right)  ^{2}}\right) \\
& +%
{\textstyle\sum\limits_{j=1,j\neq i,p}^{n}}
\left(  \frac{2(\theta_{j}-\theta_{i})}{(\theta_{j}-\theta_{i})^{2}+\theta
_{j}}-\frac{3}{\theta_{j}-\theta_{i}}\right)  \frac{-\theta_{i}-2\theta
_{p}-\left(  \theta_{i}-\theta_{p}\right)  ^{2}}{\left(  \theta_{i}-\theta
_{p}\right)  ^{4}})\times\\
&
{\textstyle\prod\limits_{j=1,j\neq i,p}^{n}}
\left(  \frac{(\theta_{j}-\theta_{i})^{2}+\theta_{j}}{(\theta_{j}-\theta
_{i})^{3}}\right)  .
\end{align*}
and%
\begin{align*}
\frac{\partial A_{i,3}}{\partial\theta_{p}}  & =\frac{\partial}{\partial
\theta_{p}}\frac{A_{i,1}}{2}\left(  \frac{2(\theta_{j}-\theta_{p})}%
{(\theta_{j}-\theta_{p})^{2}+\theta_{j}}-\frac{3}{\theta_{j}-\theta_{p}%
}\right)  ^{2}+\\
& \frac{\partial}{\partial\theta_{p}}\frac{A_{i,1}}{2}%
{\textstyle\sum\limits_{j=1,j\neq i,p}^{n}}
\left(  \frac{2(\theta_{j}-\theta_{i})}{(\theta_{j}-\theta_{i})^{2}+\theta
_{j}}-\frac{3}{\theta_{j}-\theta_{i}}\right)  +\\
& \frac{\partial}{\partial\theta_{p}}\frac{A_{i,1}}{2}\left(  -2\frac{\left(
\theta_{j}-\theta_{p}\right)  ^{2}-\theta_{j}}{\left(  (\theta_{j}-\theta
_{p})^{2}+\theta_{j}\right)  ^{2}}+\frac{3}{\left(  \theta_{j}-\theta
_{p}\right)  ^{2}}\right)  +\\
& \frac{\partial}{\partial\theta_{p}}\frac{A_{i,1}}{2}%
{\textstyle\sum\limits_{j=1,j\neq i,p}^{n}}
\left(  -2\frac{\left(  \theta_{j}-\theta_{i}\right)  ^{2}-\theta_{j}}{\left(
(\theta_{j}-\theta_{i})^{2}+\theta_{j}\right)  ^{2}}+\frac{3}{\left(
\theta_{j}-\theta_{i}\right)  ^{2}}\right)  +\\
& A_{i,1}(2\left(  \left(  \frac{4(\theta_{j}-\theta_{p})^{2}}{\left(
(\theta_{j}-\theta_{p})^{2}+\theta_{j}\right)  ^{2}}-\frac{2}{\left(
\theta_{j}-\theta_{p}\right)  ^{2}+\theta_{j}}-\frac{3}{\left(  \theta
_{j}-\theta_{p}\right)  ^{2}}\right)  \right)  \times\\
& \left(  \left(  \frac{2(\theta_{j}-\theta_{p})}{(\theta_{j}-\theta_{p}%
)^{2}+\theta_{j}}-\frac{3}{\theta_{j}-\theta_{p}}\right)  +%
{\textstyle\sum\limits_{j=1,j\neq i,p}^{n}}
\left(  \frac{2(\theta_{j}-\theta_{i})}{(\theta_{j}-\theta_{i})^{2}+\theta
_{j}}-\frac{3}{\theta_{j}-\theta_{i}}\right)  \right)  )+\\
& A_{i,1}\left(  -8\frac{\left(  \left(  \theta_{j}-\theta_{p}\right)
^{2}-\theta_{j}\right)  \left(  \theta_{j}-\theta_{p}\right)  }{\left(
(\theta_{j}-\theta_{p})^{2}+\theta_{j}\right)  ^{3}}+\frac{4(\theta_{j}%
-\theta_{p})}{\left(  (\theta_{j}-\theta_{p})^{2}+\theta_{j}\right)  ^{2}%
}+\frac{6}{\left(  \theta_{j}-\theta_{p}\right)  ^{3}}\right)
\end{align*}
where $\frac{\partial A_{i,1}}{\partial\theta_{p}},$ and $A_{i,1}$ defined in
the above Eqs (\ref{Eq_AI1 i diff p}) and (\ref{EqAi11}).

By substituting all the above obtained equation in (\ref{eq_maxdiffcase2})
with the two different cases, we obtain an unified equation. The MLEs are
solutions of (\ref{eq_maxdiffcase2}). These equation is not solvable
analytically, but some numerical iterative methods, as Newton-Raphson method,
can be used. The solutions can be approximate numerically by using software
such as MATHEMATICA, MAPLE, and R. Here we work with MATHEMATICA.
\end{proof}

\begin{description}
\item[Application ] The data set is from \citet{9a}. The data given
arose in tests on endurance of deep groove ball bearings. The data are the
number of million revolutions before failure for each of the 23 ball bearings
in the life tests and they are:%
\begin{align*}
& 17.88,28.92,33.00,41.52,42.12,45.60,48.48,51.84,51.96,54.12,55.56,\\
& 67.80,68.64,68.64,68.88,84.12,93.12,98.64,105.12,105.84,127.92,\\
& 128.04,173.40
\end{align*}
We expect that the proposed HypoXG$(\theta_{1},\theta_{2})$ distribution will
serve as a competitive model. Using our implicit equations in Eq
(\ref{EQ_impliciteFct2}) and with the help of MATHEMATICA\ to solve these
equations, we obtain the estimation of $\theta_{1}$ and $\theta_{2}$ as
\[%
\begin{tabular}
[c]{ll}%
$\widehat{\theta}_{1}=27947.47469372068$ & $\widehat{\theta}_{2}%
=0.05407088132815127$%
\end{tabular}
\]
The estimated distribution HypoXG$(\widehat{\theta}_{1},\widehat{\theta}_{2})$
versus the estimated distribution of the Hypoexponential distribution with two
parameters i.e. Hypo$\exp(\alpha_{1},\alpha_{2})$. In \cite{4a}, Chesneau,
used the method of maximum likelihood to determine an estimation of the
parameters of the Hypoexponential distribution. The method is applied on the
survival time data and showed an estimation of $\alpha_{1}$ and $\alpha_{2}$
as
\[%
\begin{tabular}
[c]{ll}%
$\widehat{\alpha}_{1}=0.027691338927039302$ & $\widehat{\alpha}_{2}%
=0.027691606617103376$%
\end{tabular}
\]

The fitted densities are superimposed on the histogram of the data sets and
shown in the following Figure:%

\begin{figure}[h]
\centering
{%
\resizebox*{6cm}{!}{\includegraphics{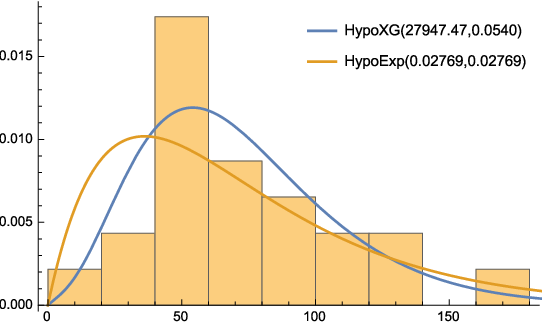}}}\hspace{5pt}
{%
\resizebox*{6cm}{!}{\includegraphics{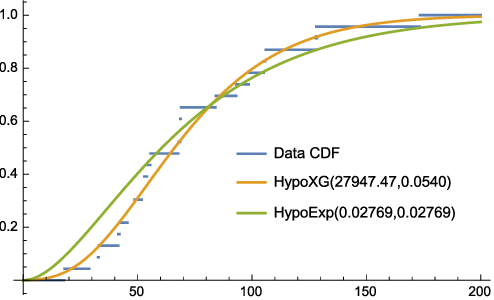}}}
\caption{Fitted pdfs and cdfs to the Survival Times for BallBearings data set.} \label{sample-figure}
\end{figure}
\end{description}


\begin{thebibliography}{}

\bibitem[Abdelkader (2003)]{1}
Abdelkader, Y.~H. (2003).
 Erlang distributed activity times in
 stochastic activity networks. \emph{Kybernetika}, \emph{39}(3), 347--358.

\bibitem[Amari \& Misra (1997)]{2}
Amari, S.~V. \& Misra, R.~B. (1997). Closed-form expressions for
distribution of sum of Exponential random variables. \emph{IEEE Transactions on
reliability}, \emph{46}(4), 519--522.

\bibitem[Bocharov et al.(2011)]{3} Bocharov, P.~P., D'Apice, C., \& Pechinkin, A.~V. (2011). \emph{Queueing theory}. Walter de Gruyter.

\bibitem[Bolch et al. (2006)]{4} Bolch, G., Greiner, S., De Meer, H., \& Trivedi, K.(2006). Queueing
Networks and Markov Chains: Modeling and Performance Evaluation with Computer
Science Applications. 2nd Edition John Wiley \& Sons. 

\bibitem[Chesneau (2018)]{4a} Chesneau, C. (2018). A new family of distributions based on the
Hypoexponential distribution with fitting reliability data. \emph{Statistica}, \emph{78}(2), 127--147.

\bibitem[Everitt (2005)]{5} Everitt, B.(2005). Encyclopedia of statistics in behavioral science. (Vol.~2). Wiley-Blackwell.

\bibitem[Kadri et al. (2015)]{7} Kadri, T., Smaili, K. \& Kadry, S. (2015). Markov modeling for
reliability analysis using Hypoexponential distribution. \emph{In Numerical Methods
for Reliability and Safety Assessment}, (pp. 599--620). Springer, Cham.

\bibitem[Khuong \& Kong (2006)]{9} Khuong, H.~V., Kong H.~Y. General expression for pdf of a sum of
independent Exponential random variables. \emph{IEEE Commun Lett}. \emph{10}(3), 159--161.


\bibitem[Lawless (2011)]{9a}Lawless J.~F. (2011). Statistical models and methods for lifetime
data. (Vol.~362). John Wiley \& Sons.

\bibitem[McLachlan \& Peel (2000)] {10} McLachlan, G.~J, \& Peel, G. (2000). Finite mixture models. New
York, NY. John Wiley \& Sons

\bibitem[Moschopoulos (1985)]{11} Moschopoulos, P.~G.(1985). The distribution of the sum of
independent Gamma random variables. \emph{Annals of the Institute of Statistical
Mathematics}, \emph{37}(3), 541--544.

\bibitem[Sen et al. (2016)]{12} Sen, S., Maiti, S.~S., \& Chandra, N. (2016). The XGamma
distribution: statistical properties and application. \emph{Journal of Modern Applied Statistical Methods}, \emph{15}(1), 774--788.

\bibitem[Smaili et al. (2014)]{21} Smaili, K, Kadri, T., Kadry, S. A Modified-Form Expressions For
The Hypoexponential Distribution, \emph{British Journal of Mathematics \&
Computer Science, }\emph{4},(3) 322--332.


\bibitem[Trivedi (1982)]{17} Trivedi, K.~S. (1982). Probability and statistics with reliability, queuing, and computer science applications (Vol.~13). Englewood Cliffs, NJ: Prentice-hall.


\end{thebibliography}
\end{document}